\theoremstyle{plain}
\newtheorem{theorem}{Theorem}[section]
    \newtheoremstyle{TheoremNum}
        {\topsep}{\topsep}              %%% space between body and thm
        {\itshape}                      %%% Thm body font
        {}                              %%% Indent amount (empty = no indent)
        {\bfseries}                     %%% Thm head font
        {.}                             %%% Punctuation after thm head
        { }                             %%% Space after thm head
        {\thmname{#1}\thmnote{ \bfseries #3}}%%% Thm head spec
    \theoremstyle{TheoremNum}
    \newtheorem{thmn}{Theorem}
\theoremstyle{definition}
\theoremstyle{definition}
\theoremstyle{definition}
\newtheorem{remark}[theorem]{Remark}
\theoremstyle{plain}
\newtheorem{proposition}[theorem]{Proposition}
    \newtheoremstyle{TheoremNum}
        {\topsep}{\topsep}              %%% space between body and thm
        {\itshape}                      %%% Thm body font
        {}                              %%% Indent amount (empty = no indent)
        {\bfseries}                     %%% Thm head font
        {.}                             %%% Punctuation after thm head
        { }                             %%% Space after thm head
        {\thmname{#1}\thmnote{ \bfseries #3}}%%% Thm head spec
    \theoremstyle{TheoremNum}
    \newtheorem{prop}{Proposition}
\theoremstyle{plain}
\newtheorem{lemma}[theorem]{Lemma}
\theoremstyle{plain}
\theoremstyle{plain}
\theoremstyle{plain}
\theoremstyle{plain}
\DeclareMathOperator{\Hom}{Hom}
\DeclareMathOperator{\Mod}{Mod}
\newcommand{\tor}{\ensuremath{\operatorname{Tor}}}
\newcommand{\f}{\ensuremath{\mathbb{F}_2}}
\newcommand{\K}{\ensuremath{\mathbb{K}}}
\newcommand{\xelt}[2]{{}_{#2}x_{#1}}
\newcommand{\welt}[2]{{}_{#2}w_{#1}}
\newcommand{\ielt}[2]{{}_{#2}\iota_{#1}}
\newcommand{\yelt}[2]{{}_{#2}y_{#1}}
\begin{document}
\thispagestyle{empty}

\title{Sutured annular Khovanov homology and two periodic braids}

\author{James Cornish}
\address{Department of Mathematics, Columbia University,
 New York, NY 10027}
 \email{cornish@math.columbia.edu}

\date{Revised: \today}

\maketitle

\begin{abstract}
Let $\widetilde{K}$ be a two-periodic braid and let $K$ be its quotient.  In this paper we show there is a spectral sequence from the next-to-top winding number grading of the sutured annular Khovanov homology of the closure of $\widetilde{K}$ to the next-to-top winding number grading of the sutured annular Khovanov homology of the closure of $K$.
\end{abstract}

\section{Introduction}
\label{sec:intro}

In 2004 Asaeda-Przytycki-Sikora ~\cite{APS} introduced an invariant of links in $I$-bundles over surfaces $F$ which takes the form of a trigraded homology group that categorifies the Kauffman bracket skein module.  In the special case that the surface $F$ is an annulus this invariant is called \emph{sutured annular Khovanov homology} and is denoted $SKh$.  This case was further studied by Grigsby-Wehrli ~\cite{GW1,GW2}, Auroux-Grigsby-Wehrli ~\cite{AGW1,AGW2}, and Roberts ~\cite{Roberts}.

A link $L$ is said to have \emph{period} $r > 1$ if there is an auto-diffeomorphism of $S^3$ of order $r$ such that the fixed set is an unknot disjoint from $L$ and the diffeomorphism maps $L$ onto itself.  Note one can use the fixed set to place $L$ into a thickened annulus.  Murasugi found formulas that are satisfied by the Alexander polynomials ~\cite{Mur_alex} and the Jones polynomials ~\cite{Mur_jones} of periodic knots.  Hendricks ~\cite{Hendricks} categorified some of Murasugi's Alexander polynomial formulas via spectral sequences.

Lipshitz-Treumann ~\cite{Lipshitz} gave another approach to proving such categorified formulas using Hochschild homology.  They show that if $A$ is a dg algebra and $M$ a dg $A$-bimodule such that $A$ fulfills certain technical conditions there is a spectral sequence from the Hochschild homology of the derived tensor of $M$ with itself that converges to the Hochschild homology of $M$.

In this paper, we use that approach to prove:

\begin{thmn}[\ref{thm:main}]

Let $\sigma$ be a braid on $n+1$ strands, $\sigma^2$ denote its square as a braid, and  $\widehat{\sigma}, \widehat{\sigma^2}\subset A \times I$ denote the annular closures of $\sigma$ and $\sigma^2$ respectively.  Then there is a spectral sequence starting at $SKh(\widehat{\sigma^2}; n-1)$ that converges to $SKh(\widehat{\sigma}; n-1)$.

\end{thmn}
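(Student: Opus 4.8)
The plan is to realize both sides of the statement as Hochschild homology of a dg bimodule over a finite-dimensional $\f$-algebra, to recognize the bimodule attached to $\sigma^2$ as the derived self-tensor product of the bimodule attached to $\sigma$, and then to feed this into the algebraic spectral sequence of Lipshitz-Treumann~\cite{Lipshitz}. First I would recall the bimodule-valued invariant of braids underlying annular Khovanov homology: to a braid $\sigma$ on $n+1$ strands one assigns a bounded complex $M_\sigma$ of $(A,A)$-bimodules over a finite-dimensional $\f$-algebra $A$ (a variant of the arc algebra), in such a way that composition of braids corresponds to derived tensor product over $A$ and annular closure corresponds to Hochschild homology. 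The delicate point is the grading analysis that isolates the part of $SKh$ visible this way: the full Hochschild homology need not compute all of $SKh(\widehat\sigma)$, but I claim that in the next-to-top winding grading it does, giving $SKh(\widehat\sigma;\, n-1)\cong HH_*(A, M_\sigma)$ up to an overall degree shift. Most of the bookkeeping is here---matching homological, quantum, and winding gradings across the identification and checking that the algebra governing the grading $n-1$ is exactly this $A$---and for this I would lean on the analyses of Grigsby-Wehrli~\cite{GW1,GW2} and Auroux-Grigsby-Wehrli~\cite{AGW1,AGW2}.

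Granting such a model, functoriality gives $M_{\sigma^2}\simeq M_\sigma\otimes_A^{\mathbf{L}} M_\sigma$, hence $SKh(\widehat{\sigma^2};\, n-1)\cong HH_*\!\left(A,\, M_\sigma\otimes_A^{\mathbf{L}} M_\sigma\right)$. I would then check that the $\mathbb{Z}/2$-action swapping the two tensor factors corresponds, under this identification, to the deck transformation of the two-periodic link $\widehat{\sigma^2}$---equivalently to the $\pi$-rotation of the annulus interchanging the two copies of $\sigma$---so that the spectral sequence produced below is genuinely a ``periodic link'' spectral sequence.

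Finally I would verify that $A$ meets the technical hypotheses of~\cite{Lipshitz}---over $\f$ this amounts to finite-dimensionality together with a formality-type condition guaranteeing that the relevant $\mathbb{Z}/2$-equivariant bar-type resolution is well behaved---possibly after passing to a quasi-isomorphic dg model of $A$, and I would check this from an explicit presentation of the algebra relevant to the grading $n-1$. Lipshitz-Treumann's theorem then supplies a spectral sequence from $HH_*(A, M_\sigma\otimes_A^{\mathbf{L}} M_\sigma)$ to $HH_*(A, M_\sigma)$; unwinding the identifications of the previous two steps and tracking the gradings converts this into the asserted spectral sequence from $SKh(\widehat{\sigma^2};\, n-1)$ to $SKh(\widehat\sigma;\, n-1)$. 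I expect the main obstacle to be the combination of (a) pinning down $A$ and the $\mathbb{Z}/2$-equivariant bimodule model in the next-to-top winding grading and (b) verifying the Lipshitz-Treumann hypotheses for it, since those hypotheses are rigid and constrain how $A$ may be presented.
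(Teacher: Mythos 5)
Your proposal follows essentially the same route as the paper: identify $SKh(\widehat{\sigma};n-1)$ and $SKh(\widehat{\sigma^2};n-1)$ with Hochschild homology of the Auroux--Grigsby--Wehrli braid bimodules over the algebra $A_n$ (with $M_{\sigma^2}\simeq M_\sigma\otimes_{A_n}^{\mathbf L}M_\sigma$), verify that $A_n$ satisfies the Lipshitz--Treumann hypotheses (properness, homological smoothness, and $\pi$-formality, which the paper establishes via Koszulness and an explicit chain-level computation), and then invoke their localization spectral sequence. You correctly flag that verifying these hypotheses is the main work; the only inessential difference is that you also propose matching the tensor-swap involution with the topological deck transformation, which the paper does not need since the Lipshitz--Treumann result is applied purely algebraically.
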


Note that here the $n-1$ in $SKh(\widehat{\sigma}; n-1)$ denotes the sutured annular Khovanov homology of $\widehat{\sigma}$ in the $n-1$ winding number grading, which measures the amount of looping around the central hole of the annulus.  This is an ungraded version of the theorem; for a more precise version see Section \ref{sec:main}.  One can also use recent work of Beliakova-Putyra-Wehrli ~\cite{BPW} to strengthen this theorem to apply to all periodic links; see Theorem \ref{thm:stronger}.

Let $q_{K,n-1}$ denote the Euler characteristic of $SKh(K; n-1)$ for a link $K$, which one can also compute via a skein relation.  Let $y$ be the variable that records the quantum grading.  Then the decategorification of Theorem \ref{thm:main} is:

\begin{prop}[\ref{prop:decat}] Let $\sigma$ be a braid on $n+1$ strands.  Then $y^{n-1}q_{\widehat{\sigma^2},n-1} \equiv q_{\widehat{\sigma},n-1}^2 \pmod 2$.
\end{prop}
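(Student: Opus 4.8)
The plan is to deduce the congruence by taking graded Euler characteristics in the refined form of Theorem~\ref{thm:main}, resting on two elementary inputs: a convergent spectral sequence preserves graded Euler characteristic, and $f(y^2)\equiv f(y)^2\pmod 2$ for every $f\in\mathbb{Z}[y,y^{-1}]$ (since $a^2\equiv a\pmod 2$ and the cross terms carry a factor of $2$).

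First I would extract the precise version of Theorem~\ref{thm:main} from Section~\ref{sec:main}; in the Lipshitz--Treumann framework I expect it to present the spectral sequence as one of $\f[\theta,\theta^{-1}]$-modules (with $\theta$ of a fixed bidegree), with $E_1$-page $SKh(\widehat{\sigma^2};n-1)\otimes_{\f}\f[\theta,\theta^{-1}]$, converging to $SKh(\widehat{\sigma};n-1)\otimes_{\f}\f[\theta,\theta^{-1}]$, where on the target the quantum grading has been doubled --- the Frobenius/Tate twist intrinsic to $\mathbb{Z}/2$-Smith theory over $\f$ --- and then shifted by a fixed amount $s=s(n)$. Granting this, I would take graded Euler characteristics: only finitely many $\f[\theta,\theta^{-1}]$-linear differentials occur, so $\chi$ is preserved, and once the common $\f[\theta,\theta^{-1}]$-factor is divided out one is left with the identity $q_{\widehat{\sigma^2},n-1}(y)=y^{s}\,q_{\widehat{\sigma},n-1}(y^2)$ in $\f[y,y^{-1}]$. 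To pin down $s$ it suffices to evaluate both sides on a single braid: for $\sigma$ the identity in $B_{n+1}$, the closure $\widehat{\sigma}=\widehat{\sigma^2}$ is the $(n+1)$-component unlink of core circles, whose sutured annular Khovanov homology is the $(n+1)$-fold tensor power of that of a core unknot, giving $q_{\widehat{\sigma},n-1}(y)=(n+1)\,y^{n-1}$ and forcing $s=1-n$. Multiplying the displayed identity by $y^{n-1}$ and applying $q_{\widehat{\sigma},n-1}(y^2)\equiv q_{\widehat{\sigma},n-1}(y)^2\pmod 2$ then yields the proposition.

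I expect the only real work to be the first step: reading off from the refined theorem the exact grading bookkeeping --- the quantum-grading doubling and the shift --- and checking that the $\f[\theta,\theta^{-1}]$-linear spectral sequence genuinely equates the graded Euler characteristics of source and target after the $\theta$'s are cleared. As a cross-check, and an alternative self-contained argument, one can instead compute $q_{\widehat{\sigma},n-1}$ directly from the annular Kauffman-bracket skein relation: in winding grading $n-1$ the cube of resolutions of an $(n+1)$-strand braid closure is governed by a Burau-type linear action $\rho$ of $B_{n+1}$ on an $(n+1)$-dimensional $\f[y,y^{-1}]$-module (the second-from-top weight space), so that $q_{\widehat{\sigma},n-1}$ is a fixed monomial in $y$ (depending only on $n$ and the writhe) times a trace of $\rho(\sigma)$. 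Since passing from $\sigma$ to $\sigma^2$ leaves the strand number unchanged and doubles the writhe, $q_{\widehat{\sigma^2},n-1}$ is the corresponding monomial times $\operatorname{tr}(\rho(\sigma)^2)$, and the elementary congruence $\operatorname{tr}(X^2)\equiv(\operatorname{tr}X)^2\pmod 2$ for a square matrix $X$ over a commutative ring (again the cross terms carry a factor of $2$) completes the computation, the identity-braid example once more pinning the monomial discrepancy to $y^{n-1}$. Either way, this congruence is the sutured annular refinement of Murasugi's period-two formula for the Jones polynomial~\cite{Mur_jones}.
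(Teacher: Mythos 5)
Your overall route is the paper's own: decategorify Theorem~\ref{thm:main}, use that the spectral sequence cancels generators in pairs while preserving the quantum grading, and finish with the Frobenius congruence $f(y^2)\equiv f(y)^2\pmod 2$. Two steps need repair, though. First, the shift $s$ is not something to calibrate by an example: the precise form of Theorem~\ref{thm:main} states it (shift the quantum grading of $E^\infty$ up by $n-1$ and halve, i.e.\ $s=-(n-1)$), and the writhe-dependent shifts coming from Lemma~\ref{lem:AGW} cancel precisely because $n_\pm(\sigma^2)=2n_\pm(\sigma)$ while the abutment's quantum grading is doubled. Your proposed calibration by the identity braid actually fails for odd $n$: there $q_{\widehat{\sigma},n-1}=(n+1)\,y^{n-1}\equiv 0$ in $\f[y,y^{-1}]$, so the evaluation reads $0=0$ and pins down nothing; and the premise that $s$ depends only on $n$ (not on the braid) is exactly the grading bookkeeping you were hoping to avoid. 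Second, ``a convergent spectral sequence preserves graded Euler characteristic'' is not available in the signed sense here: as the paper points out, the horizontal (Tate) differential has degree $0$ in the homological grading of $SKh^{i,j}$, so the differentials do not respect that grading, and all one gets is that the unsigned, quantum-graded dimension count is preserved modulo $2$ (generators cancel in pairs within a fixed quantum degree). Your formulation over $\f[y,y^{-1}]$ silently uses exactly this weaker statement, which suffices for the mod $2$ claim, but it is the second reason (besides the squaring) that the congruence holds only modulo $2$ and should be made explicit.

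Your Burau-type trace argument is a genuinely different and appealingly elementary route (and $\operatorname{tr}(X^2)\equiv(\operatorname{tr}X)^2\pmod 2$ is correct), but as written it is only a sketch: the identification of $q_{\widehat{\sigma},n-1}$ with a fixed monomial times a trace of a Burau-like representation in the next-to-top winding grading is not established anywhere in this paper and would itself require a skein-theoretic proof, including the claim that the prefactor depends only on $n$ and the writhe. If carried out, it would give a self-contained decategorified statement independent of Theorem~\ref{thm:main}, which the paper's proof does not attempt.
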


Chbili ~\cite{Chbili} and Politarczyk ~\cite{Polit} studied the Khovanov homology of periodic links constructing what they call \emph{equivariant Khovanov homology} for such links, related to sutured annular Khovanov homology.  Seidel-Smith ~\cite{SS} also have a result, in this case for symplectic Khovanov homology, for periodic links.  They obtain a rank inequality~\cite[Corollary 3]{SS} for the symplectic Khovanov homology of a two-periodic link $K$ as compared to its quotient $\widetilde{K}$ which they obtain via a spectral sequence.

This paper is organized as follows. Section \ref{sec:back} is background on the definitions of the relevant algebras.  In Sections \ref{sec:sm_proper} and \ref{sec:pi} we show that these algebras fulfill the technical conditions outlined by Lipshitz-Treumann; in Section \ref{sec:sm_proper} we show that these algebras are homologically smooth and proper while in Section \ref{sec:pi} we go through the calculation necessary to show that they are $\pi$-formal.  We then use these conditions to prove Theorem \ref{thm:main} in Section \ref{sec:main}.  Finally, Section \ref{sec:dec_comp} contains the decategorification of Theorem \ref{thm:main} as well as an example computation illustrating Theorem \ref{thm:main}.

\section{Background}
\label{sec:back}
The algebra $A_n$ used in this paper is the quotient of a path algebra.  The graph $G_n$ of this path algebra is shown in Figure \ref{graphA}.  The ring the path algebra is defined over is $\K = \oplus_{v \in Vert(G)} \f$. Thus in the path algebra the vertices are idempotents, with elements of the algebra being strings of edges so that the head of one edge is the tail of the next.

\begin{figure}[h]
\centerline{
\xymatrix{
        		\bullet \ar@/^/[r]^{\xelt{0}{1}} \ar@/_/[r]_{\ielt{0}{1}} 
		& \bullet \ar@/^/[r]^{\xelt{1}{2}} \ar@/_/[r]_{\ielt{1}{2}}
		& \bullet \ar@/^/[r]^{\xelt{2}{3}} \ar@/_/[r]_{\ielt{2}{3}} & {\cdots} \ar@/^/[r]^{\xelt{n-2}{n-1}} \ar@/_/[r]_{\ielt{n-2}{n-1}}
		& \bullet \ar@/^/[r]^{\xelt{n-1}{n}} \ar@/_/[r]_{\ielt{n-1}{n}} 
		& \bullet }}
\caption{The graph $G_n$}
\label{graphA}
\end{figure}
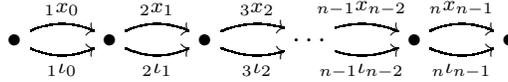

%-<.5in>

We quotient out by the relations $\ielt{i+1}{i+2}*\xelt{i}{i+1} = \xelt{i+1}{i+2}*\ielt{i}{i+1}$ and $ \xelt{i+1}{i+2}*\xelt{i}{i+1} = 0$ to form $A_n$ where multiplication should be thought of as concatenating paths starting from the rightmost element.  

A note about notation:  what we are denoting by $A_n$ is denoted by Auroux, Grigsby, and Wehrli in ~\cite{AGW1} and ~\cite{AGW2} as $B^{Kh}$, with an isomorphism that sends our $\xelt{i}{i+1}$ and $\ielt{i}{i+1}$ to $\xelt{i}{i+1}$ and ${}_{i+1}\mathbbm{1}_{i}$ respectively.  We then denote as $\bar{A}_n$ what is denoted in ~\cite{AGW1,AGW2} as $A_n$.  

The algebra $A_n$ has quadratic dual $B_n$.  It is also the quotient of a path algebra, the graph for which is shown in Figure \ref{graphB}.  The relations imposed are $\welt{i}{i-1}*\yelt{i+1}{i} = \yelt{i}{i-1}*\welt{i+1}{i}$ and $\yelt{i}{i-1}*\yelt{i+1}{i} = 0$.  To see $B_n$ is the quadratic dual of $A_n$, we think of $\welt{i+1}{i}$ as $\xelt{i}{i+1}^*$ and  $\yelt{i+1}{i}$  as $\ielt{i}{i+1}^*$; then the relations above for $B_n$ are dual to the relations for $A_n$ as required.    Note that $B_n$ is not directly related to $B$ in ~\cite{AGW1,AGW2}.

\begin{figure}[h]
\centerline{
\xymatrix{
        		\bullet & \ar@/^/[l]^{\welt{1}{0}} \ar@/_/[l]_{\yelt{1}{0}} 
		 \bullet & \ar@/^/[l]^{\welt{2}{1}} \ar@/_/[l]_{\yelt{2}{1}}
		 \bullet & \ar@/^/[l]^{\welt{3}{2}} \ar@/_/[l]_{\yelt{3}{2}} {\cdots} & \ar@/^/[l]^{\welt{n-1}{n-2}} \ar@/_/[l]_{\yelt{n-1}{n-2}}
		 \bullet & \ar@/^/[l]^{\welt{n}{n-1}} \ar@/_/[l]_{\yelt{n}{n-1}} 
		 \bullet }}
\caption{The graph for defining $B_n$}
\label{graphB}
\end{figure}
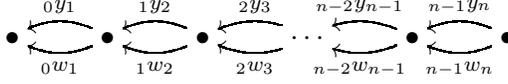

Let $\mathfrak{B}_{n+1}$ denote the braid group on $n+1$ strands.  To a braid $\sigma \in \mathfrak{B}_{n+1}$ we shall associate a bimodule $M_{\sigma}$ over $A_n$.  We first associate a module $M_{\sigma_i^{\pm}}$ to each elementary braid $\sigma_i^{\pm}$: if $\sigma = \sigma_{i_1}^{\pm}\cdots\sigma_{i_k}^{\pm}$ then $M_{\sigma} = M_{\sigma_{i_1}^{\pm}} \otimes_{A_n} \cdots \otimes_{A_n} M_{\sigma_{i_k}^{\pm}}$.  The modules $M_{\sigma_i^{\pm}}$ are defined as the mapping cones of two term complexes relating $A_n$ and projective modules that depend on $i$; the direction of the arrow depends on the sign of $\sigma_i^{\pm}$.  

Note once again that our $M_{\sigma}$ is denoted as $M_{\sigma}^{Kh}$ in ~\cite{AGW1,AGW2}.  We shall denote by $\bar{M}_{\sigma}$ what ~\cite{AGW1,AGW2} denote as $M_{\sigma}$.

Let $\sigma \in \mathfrak{B}_{n+1}$ and let $m(\widehat{\sigma}) \subset A \times I$ denote the mirror of the annular closure of $\sigma$.  Let $SKh(\widehat{\sigma}; n-1)$ denote the sutured annular Khovanov homology of $\widehat{\sigma}$ with winding number grading $n-1$, where the winding number grading appropriately measures the amount of wrapping around the central hole of the annulus.  We get the following lemma. Note $n_-$ and $n_+$ refer to the positive and negative crossings of $\widehat{\sigma}$ respectively, while the square brackets denote shifting the homological grading down and the curly brackets denote shifting the quantum grading up.

\begin{lemma}\label{lem:AGW} There is an isomorphism $SKh(\widehat{\sigma}; n-1) \cong HH(A_n, M_{m(\sigma)})[-n_-]\{(n-1)+n_{+}-2n_-\}$.

\end{lemma}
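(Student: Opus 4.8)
The plan is to obtain the stated isomorphism directly from the identification, due to Auroux--Grigsby--Wehrli, between the sutured annular Khovanov homology of a braid closure in its next-to-top winding number grading and the Hochschild homology of the associated bimodule, together with a careful bookkeeping of grading shifts and of the mirror convention. The geometric input is that closing up a braid in the thickened annulus corresponds, after categorification, to taking the self-trace of its bimodule, i.e. Hochschild homology; this is established in \cite{AGW1,AGW2} for the algebra they call $B^{Kh}$ (our $A_n$) and the bimodule $M_\sigma^{Kh}$ (our $M_\sigma$), and I would cite it rather than reprove it. The grading $n-1$ is forced: it is the next-to-top winding number grading, and the one in which the $A_n$-bimodule description is faithful, the strictly top grading $n+1$ recording only the oriented resolution.

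First I would quote the precise form of the Auroux--Grigsby--Wehrli statement, with their normalization of $SKh$ and their direction conventions for the elementary bimodules $M_{\sigma_i^{\pm}}$ (mapping cones between $A_n$ and the projective $P_i$, with the arrow reversed according to the sign of the crossing). The main step is then to pass from that normalization to the one fixed in Section \ref{sec:back}. Two discrepancies must be absorbed. The first is the handedness built into the cube of resolutions: matching the convention under which the bimodules $M_{\sigma_i^{\pm}}$ are defined here to the Khovanov normalization we use replaces $\widehat\sigma$ by its mirror $m(\widehat\sigma)$, which is why $M_{m(\sigma)}$, and not $M_\sigma$, appears. The second is the usual Khovanov shift: the homology of a diagram $D$ is obtained from the cube of resolutions by shifting the homological grading by $-n_-(D)$ and the quantum grading by $n_+(D)-2n_-(D)$, where $n_\pm(D)$ count the crossings of $D$. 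Since mirroring interchanges the positive and negative crossings, computing these shifts for $m(\widehat\sigma)$ in terms of the crossings of $\widehat\sigma$ (with $n_+, n_-$ as named in the statement) and comparing with the unshifted complex underlying the Hochschild computation produces exactly $[-n_-]\{(n-1)+n_+-2n_-\}$, the extra summand $n-1$ recording the normalization of the winding number grading of $SKh$ relative to the internal grading on $A_n$.

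The only real obstacle is this grading-and-mirror bookkeeping: no new geometric or algebraic idea is needed beyond \cite{AGW1,AGW2}, but because several conventions are in play (\cite{APS}, \cite{GW1,GW2}, \cite{AGW1,AGW2}, and the normalization of Section \ref{sec:back}) one must be scrupulous about which of $\sigma$ and $m(\sigma)$ occurs, which of $n_+$ and $n_-$ counts which crossings, and how the winding number grading matches the idempotent grading of $A_n$. I would carry this out by verifying the claimed shift on a single generator $\sigma_i^{\pm}$, then extending by functoriality of $\sigma \mapsto M_\sigma$ under concatenation together with additivity of the crossing counts and of the shifts under $\otimes_{A_n}$ to reach an arbitrary braid word, the underlying chain-level quasi-isomorphism and the Hochschild identification for a general word being supplied by \cite{AGW1,AGW2}.
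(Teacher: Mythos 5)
Your overall shape is right---this lemma is a citation-assembly statement, and the mirror and the shift $[-n_-]\{(n-1)+n_+-2n_-\}$ are exactly the Khovanov-type normalizations you describe---but you have glossed over the one step that carries the actual content of the paper's proof. The Auroux--Grigsby--Wehrli identification of $SKh(\widehat{\sigma};n-1)$ with Hochschild homology (\cite[Theorem 5.1]{AGW2}) is stated for \emph{their} algebra $A_n$ and bimodule $M_{m(\sigma)}$, i.e.\ for what this paper calls $\bar{A}_n$ and $\bar{M}_{m(\sigma)}$ (the Khovanov--Seidel quiver algebra), not for $B^{Kh}$, which is the algebra this paper calls $A_n$. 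Your proposal asserts that the identification ``is established in \cite{AGW1,AGW2} for the algebra they call $B^{Kh}$ (our $A_n$)'' and cites it as such; in that form the citation does not exist. The missing ingredient is the comparison of Hochschild homologies over the two different algebras, $HH(\bar{A}_n,\bar{M}_{m(\sigma)}) \cong HH(A_n,M_{m(\sigma)})$, which the paper extracts from the proof of \cite[Theorem 6.1]{AGW2}. Without either quoting that isomorphism or proving it yourself (it is not a formal consequence of the notational dictionary: $\bar{A}_n$ and $A_n$ are genuinely different algebras with different bimodules), your argument only reaches $HH(\bar{A}_n,\bar{M}_{m(\sigma)})$ and does not yield the statement as phrased over $A_n$.

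Two smaller points. First, the mirror and the grading shift, including the $(n-1)$ and the fact that $n_-$, $n_+$ count the positive and negative crossings of $\widehat{\sigma}$ (equivalently the negative and positive crossings of $m(\widehat{\sigma})$), are already built into \cite[Theorem 5.1]{AGW2}, so your plan to re-derive them generator-by-generator and propagate through $\otimes_{A_n}$ is unnecessary work, though not incorrect. Second, once you add the $HH(\bar{A}_n,\bar{M}_{m(\sigma)}) \cong HH(A_n,M_{m(\sigma)})$ step, your route collapses to the paper's two-line proof, so the fix is a matter of citing precisely rather than of new mathematics.
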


\begin{proof}
By ~\cite[Theorem 5.1]{AGW2}, we know that $SKh(\widehat{\sigma}; n-1) \cong HH(\bar{A}_n, \bar{M}_{m(\sigma)})[-n_-]\{(n-1)+n_{+}-2n_-\}$ as bigraded vector spaces.  In the proof of ~\cite[Theorem 6.1]{AGW2}, we see that $ HH(\bar{A}_n, \bar{M}_{m(\sigma)}) \cong  HH(A_n, M_{m(\sigma)})$. \end{proof}

\section{Homologically smooth and Proper}
\label{sec:sm_proper}

Two conditions that we need our algebras $A_n$ to satisfy are that they are homologically smooth and proper.  The fact that $A_n$ is proper is trivial.

\begin{proposition} The algebra $A_n$ is homologically proper for all $n$.
\label{prop:proper}
\end{proposition}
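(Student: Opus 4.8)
The statement to prove is that $A_n$ is homologically proper, which in this context means that $A_n$ has finite total dimension over $\K$ (equivalently, as an $\f$-vector space), so that in particular $A_n$ is a perfect complex over itself on both sides and all $\operatorname{Tor}$ and $\operatorname{Hom}$ computations land in finite-dimensional spaces. Since the author has just said ``the fact that $A_n$ is proper is trivial,'' the proof should be short: it amounts to exhibiting an explicit finite $\f$-basis for $A_n$ and counting.

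\textbf{Key steps.} First I would recall that $A_n$ is the quotient of the path algebra $\f G_n$ by the two-sided ideal generated by the relations $\ielt{i+1}{i+2}\ast\xelt{i}{i+1} = \xelt{i+1}{i+2}\ast\ielt{i}{i+1}$ and $\xelt{i+1}{i+2}\ast\xelt{i}{i+1} = 0$. Since $G_n$ is a linear $A_{n+1}$-type quiver oriented in one direction (all edges point rightward), every path in $\f G_n$ is a monotone word in the edges, and there are no oriented cycles, so even the path algebra before imposing relations is finite-dimensional; imposing relations only cuts it down. Next I would pin down a normal form: using the relation $\ielt{i+1}{i+2}\ast\xelt{i}{i+1} = \xelt{i+1}{i+2}\ast\ielt{i}{i+1}$ one can slide all $x$-edges to (say) the right end of any word, and then $\xelt{i+1}{i+2}\ast\xelt{i}{i+1} = 0$ forces at most one $x$-edge in a nonzero reduced word. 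Hence a spanning set for $e_j A_n e_i$ (paths from vertex $i$ to vertex $j$, with $i \le j$) is: the all-$\iota$ path, together with the paths that are all $\iota$ except for a single $x$ in one position — a finite list, of size $O(j-i)$. Counting over the $(n+2)$ vertices of $G_n$ gives $\dim_{\f} A_n = O(n^3) < \infty$. Finally I would note that since $\K$ is finite-dimensional and semisimple over $\f$ and $A_n$ is finite-dimensional, $A_n$ is automatically proper in the sense required (its Hochschild homology with coefficients in any finite-dimensional bimodule, and all the $\operatorname{Tor}$-groups appearing later, are finite-dimensional).

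\textbf{Main obstacle.} There is essentially no obstacle — this is the ``trivial'' half of the pair of conditions, and the real work is in Proposition on homological smoothness and in Section \ref{sec:pi}. The only thing requiring a modicum of care is verifying that the claimed normal form actually spans, i.e.\ that the two relations suffice to reduce every path; but since the quiver is a directed line and the relations already let one transpose an adjacent $\iota x$ into $x\iota$ and kill $xx$, a short induction on word length handles it. I would present the basis/normal-form argument in two or three sentences and conclude that $\dim_\f A_n < \infty$, hence $A_n$ is proper.
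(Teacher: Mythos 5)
Your proposal is correct and matches the paper's approach: the paper's entire proof is the observation that each $A_n$ is finite (finite-dimensional over $\f$), which is exactly the fact you establish, just spelled out via the explicit normal form (at most one $x$-edge per nonzero path, slid to one end) and a dimension count. The extra detail is harmless but not needed for the paper's one-line argument.
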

\begin{proof} Each algebra $A_n$ is itself finite. \end{proof}

However to see that $A_n$ is homologically smooth we shall first need a lemma.

\begin{lemma} Each $A_n$ is a homogeneous Koszul algebra.
\label{lem:koszul}
\end{lemma}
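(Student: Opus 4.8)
The plan is to verify the definition directly: $A_n$ is a quadratic algebra presented by a path algebra on the graph $G_n$ modulo the homogeneous quadratic relations $\ielt{i+1}{i+2}*\xelt{i}{i+1} = \xelt{i+1}{i+2}*\ielt{i}{i+1}$ and $\xelt{i+1}{i+2}*\xelt{i}{i+1}=0$, so homogeneity (with all edges in degree $1$) is immediate from the presentation; the content is Koszulness. I would exhibit an explicit linear minimal graded projective resolution of the semisimple module $\K = A_n / A_n^{\ge 1}$ over $A_n$, i.e.\ show $\tor^{A_n}_{i}(\K,\K)$ is concentrated in internal degree $i$ for every $i$. Because $A_n$ is built from a linear $A_\infty$-type quiver (the $A_{n+1}$ Dynkin graph with a doubled set of arrows) with a very small relation space, the resolution should be periodic/eventually regular in shape and writable in closed form.

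The key steps, in order, are: (1) fix the quadratic presentation and record that $A_n$ is finite-dimensional with a basis of "admissible" monomials — paths in which no two consecutive edges are both $x$'s, with the relation allowing one to push $x$'s past $\iota$'s; I would pin down a monomial basis (e.g.\ normal forms $\iota^a x^\epsilon$ along the line) so that dimensions of graded pieces are under control. (2) Compute the quadratic dual $B_n$, which the excerpt already identifies, and recall the standard fact that $A_n$ is Koszul iff $B_n$ is, and that for a quadratic algebra Koszulness is equivalent to the Koszul complex $\cdots \to (B_n^!)^*_i \otimes_\K A_n \to \cdots$ being a resolution of $\K$; equivalently to the numerical/Hilbert series identity $H_{A_n}(t)\,H_{B_n}(-t) = 1$ holding *together with* a dimension count that forces exactness (for a quadratic algebra with the right Hilbert series matching, one still needs exactness, but here I can get it by hand). (3) Actually build the Koszul complex and check exactness in each internal degree; since the graphs $G_n$ are linear, $A_n$ and $B_n$ are "locally" like truncated polynomial/exterior-type algebras on two generators with one commuting relation and one squaring-to-zero relation, and the Koszul complex in each fixed internal degree is a short explicit complex of $\K$-modules whose exactness can be verified by inspection or a short induction on $n$ and on the degree. (4) Conclude homogeneity + the resolution being linear $\Rightarrow$ Koszul.

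An alternative, possibly cleaner, route I would keep in reserve: use a PBW / rewriting argument. The relations $\{\ielt{i+1}{i+2}\xelt{i}{i+1} - \xelt{i+1}{i+2}\ielt{i}{i+1},\ \xelt{i+1}{i+2}\xelt{i}{i+1}\}$ form a quadratic Gröbner basis for a suitable admissible monomial order on paths (order the two arrows out of each vertex so that $x > \iota$ and use deg-lex on paths): one checks there are no new obstructions/overlaps, i.e.\ all $S$-polynomials/overlap ambiguities reduce to zero. A quadratic Gröbner basis implies the algebra is PBW, hence Koszul, by the standard theorem (Priddy; see Polishchuk–Positselski). This reduces the whole lemma to a finite, local overlap computation on three consecutive vertices, which is routine.

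The main obstacle will be step (3)/the overlap check: making the exactness of the Koszul complex (or the confluence of the rewriting system) genuinely rigorous rather than "clear from the picture," especially keeping track of the idempotent/vertex decoration so that paths compose legally. The linearity of $G_n$ makes this very constrained — essentially a rank-$2$ situation at each vertex with mixed commuting-and-nilpotent relations — so I expect it to go through, but it is the place where care is needed; I would likely isolate it as the one computation to do in detail (or cite the PBW/Gröbner criterion and do only the local overlap resolutions).
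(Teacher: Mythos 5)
Your proposal is essentially sound, but the route you hold ``in reserve'' is precisely the paper's proof, while the route you foreground is the harder one. The paper does not build the Koszul complex or check exactness degree by degree; it simply observes that $A_n$ is quadratic on the generators $\ielt{i}{i+1}, \xelt{i}{i+1}$, orders them with $x$'s greater than $\iota$'s, takes as normal forms the words with the $x$ (if any) pushed to the far left, and checks the PBW condition: multiplying two normal forms either stays in normal form, gives zero, or strictly increases the lexicographic order when rewritten, and decompositions of normal forms are again normal forms. Then one quotes Priddy's theorem that PBW implies Koszul, exactly your Gr\"obner-basis/overlap argument. So your ``alternative'' is the paper's argument, and your main plan (steps (1)--(3), proving linearity of $\tor^{A_n}(\K,\K)$ by exhibiting and verifying exactness of an explicit linear resolution) buys nothing extra here and concentrates all the work in the step you yourself flag as the obstacle; the whole point of the PBW criterion is to replace that exactness check by a finite local confluence computation. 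One small refinement to your step (1): the normal forms are even simpler than ``no two consecutive $x$'s,'' since any word containing two $x$'s anywhere is zero (the relation $\ielt{i+1}{i+2}\xelt{i}{i+1}=\xelt{i+1}{i+2}\ielt{i}{i+1}$ lets you slide the $x$'s together, where $\xelt{i+1}{i+2}\xelt{i}{i+1}=0$ kills the word), so a basis is given by paths with at most one $x$, which can be normalized to sit at one end; this also shrinks the overlap check you would need to carry out on three consecutive vertices. With that check actually performed (or with the paper's labeling argument), your reserve route completes the proof; your primary route, as written, still leaves the exactness verification to be done.
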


\begin{proof}
We will show that $A_n$ is a PBW algebra, then by ~\cite[Theorem 5.3]{Priddy} we will see that each $A_n$ is a homogeneous Koszul algebra.

First, note that $A_n$ is a homogeneous pre-Koszul algebra since it can be generated as an algebra by the elements of the form $ {}_{i}\iota_{i+1} $ and $ {}_{i}x_{i+1} $ with quadratic relations as above. These generators are also a Koszul basis: label the basis such that for any word in $A_n$  the $x$, if there is one, is on the far left, while ordering the basis such that $x$'s are greater than $\iota$'s.  Note that multiplying two elements of $A_n$ preserves this labeling, is zero, or we increase the lexicographical ordering when rearranging to get back to this labeling.  Moreover decomposing an element of $A_n$ as a product in any fashion also preserves this labeling.  Thus $A_n$ is indeed PBW and thus Koszul. \end{proof}

\begin{proposition} Each $A_n$ is homologically smooth.
\label{prop:smooth}
\end{proposition}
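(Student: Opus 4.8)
The plan is to deduce homological smoothness directly from the Koszulity established in Lemma~\ref{lem:koszul}, together with the fact that $A_n$ is finite-dimensional over the base ring $\K$. Recall that an algebra $A$ (over a semisimple base like $\K = \oplus_v \f$) is homologically smooth if $A$, viewed as an $A$-bimodule, admits a finite-length resolution by finitely generated projective $A$-bimodules. For a Koszul algebra this resolution is supplied explicitly by the Koszul bimodule resolution: the terms are $A \otimes_\K (A^!_j)^* \otimes_\K A$ (equivalently built from the spaces of degree-$j$ Koszul relations), with differential given by the comultiplication on the Koszul dual coalgebra. So the first step is to invoke Lemma~\ref{lem:koszul} to get that $A_n$ is Koszul, hence has a Koszul bimodule resolution whose $j$-th term is finitely generated projective as an $A_n$-bimodule (it is a summand of a sum of $A_n e_v \otimes_\f e_w A_n$, which are exactly the indecomposable projective bimodules).

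The second step is to check that this resolution has \emph{finite length}, i.e.\ that the Koszul dual $A_n^!$ is finite-dimensional — equivalently, that $B_n$ (which the excerpt identifies as the quadratic dual of $A_n$) is finite-dimensional over $\K$. This is where one does a small explicit computation: from the graph $G_n$ for $B_n$ in Figure~\ref{graphB} and the relations $\welt{i}{i-1}*\yelt{i+1}{i} = \yelt{i}{i-1}*\welt{i+1}{i}$ and $\yelt{i}{i-1}*\yelt{i+1}{i} = 0$, one reads off that any sufficiently long path is forced to repeat a $yy$ pattern and hence vanish, or can be normalized (via the commuting relation) to a form with at most one $w$; since the underlying graph $G_n$ is a finite type-$A$ quiver (a line with $n+1$ vertices), paths have bounded length, so $B_n$ is finite-dimensional. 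Therefore the Koszul bimodule resolution of $A_n$ terminates after finitely many steps, and $A_n$ is homologically smooth.

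The main obstacle, such as it is, is bookkeeping rather than conceptual: one must be careful that the Koszul bimodule resolution is the \emph{right} notion of resolution for smoothness (bimodule, not one-sided, projective) and that over the non-field base $\K$ the terms $A_n \otimes_\K A_n^{!*}_j \otimes_\K A_n$ really are finitely generated projective bimodules — this is immediate because $\K$ is separable (a product of copies of $\f$) and $A_n$ is finite-dimensional. An alternative, essentially equivalent route would be to cite a standard reference (e.g.\ the fact that a finite-dimensional Koszul algebra over a semisimple base is homologically smooth iff its Koszul dual is finite-dimensional) and then only verify the finite-dimensionality of $B_n$; I would take whichever phrasing matches the conventions of the Lipshitz--Treumann framework being used later. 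I expect the whole argument to be short: cite Lemma~\ref{lem:koszul}, produce the Koszul bimodule resolution, observe finiteness of $B_n$ from Figure~\ref{graphB}, done.
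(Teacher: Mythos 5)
Your proposal is correct and follows essentially the same route as the paper: both deduce from Lemma~\ref{lem:koszul} that the Koszul bimodule resolution $A_n \otimes_{\K} B_n^* \otimes_{\K} A_n$ resolves $A_n$, and both conclude smoothness from its finiteness together with the fact that its terms are (retracts of) finite free bimodules over $\f$. The only difference is phrasing: the paper spells this out in Lipshitz--Treumann's ``finite cell retract'' language by writing down the explicit inclusion and retraction between the $\otimes_{\K}$ and $\otimes_{\f}$ versions, which is exactly the separability-of-$\K$ point you invoke.
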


\begin{proof}

Recall that $B_n$ is the quadratic dual of $A_n$.  Thus, by  ~\cite[Proposition 3.1]{quadratic} and Lemma \ref{lem:koszul} we have that $\tor^{A_n}(\K, \K) \simeq B_n^*$, so the Koszul resolution of $A_n$ as found in ~\cite[Definition 3.7]{Priddy}  is  $A_n \otimes_{\K}  B_n^* \otimes_{\K} A_n $.  We claim that this is the required resolution by a finite cell retract.  The Koszul resolution is a resolution due to ~\cite[Theorem 3.8]{Priddy}; we claim that it is a finite cell retract of $A_n \otimes_{\f}  B_n^* \otimes_{\f} A_n$.  Since  $A_n \otimes_{\f}  B_n^* \otimes_{\f} A_n$ is finite and free it is certainly a finite cell bimodule.

To see that $A_n \otimes_{\K}  B_n^* \otimes_{\K} A_n$ is a cell retract of $A_n \otimes_{\f}  B_n^* \otimes_{\f} A_n$ note that the retract map is the natural map from setting more elements equivalent when going from tensoring over $\f$ to tensoring over $\K$.  To see the inclusion map, notice that for for any element $\alpha \in A_n \otimes_{\K}  B_n^* \otimes_{\K} A_n$ we have $\alpha = \sum_{i,j} a_i \otimes_{\K} {}_ib_j \otimes_{\K} {}_jc$ for some $a_i, {}_jc \in A_n, {}_ib_j \in B_n^*$ where $a_i k_i {}_ib_j = a_i {}_ib_j$ and ${}_ib_jk_j{}_jc = {}_ib_j{}_jc$ for some $k_i$ and $k_j$ that are primitive idempotents in $\K$. The inclusion map then sends $\alpha$ to $\sum_{i,j} a_i \otimes_{\f} {}_ib_j \otimes_{\f} {}_jc$.\end{proof}

\section{$\pi$-formality}
\label{sec:pi}

The final condition we need on our algebras $A_n$ is that they be $\pi$-formal, a condition that requires certain operations $d^{2i}$ vanish on Hochschild homology.  To define these operations let $M$ be an $A_n$ bimodule and let $R$ be a biprojective resolution of $A_n$.  Then the Hochschild chain complex $HC(M \otimes^L M)$, where $\otimes^L$ is the derived tensor product, is equal to $R \otimes_{A_n} (M \otimes_{A_n} R \otimes_{A_n} M)$ modded out by the equivalence relation generated by $r \otimes (m \otimes r' \otimes m'a) \sim ar \otimes (m \otimes r' \otimes m')$.  Let $\tau$ be the map on $HC(M \otimes^L M)$ that sends $r \otimes (m \otimes r' \otimes m')$ to $r' \otimes (m' \otimes r \otimes m)$.  By definition $\tau$ commutes with $\partial_{HC(M \otimes^L M)}$ and $\tau^2 = 1$, thus since we are working over $\f$ we have $(1+\tau)^2 = 0$.

We therefore define a bicomplex $HC_{*,*}^{Tate}(M\otimes^L M)$ where $HC_{p,q}^{Tate}(M\otimes^L M) = HC_q (M \otimes^L M)$ with the vertical differential being $\partial_{HC(M \otimes^L M)}$ and the horizontal differential being $(1+\tau)$.  We can induce a spectral sequence from this bicomplex by first looking at the horizontal filtration and then the vertical filtration.  Call the differential for this spectral sequence $d^r$.  In ~\cite[Proposition 3.10]{Lipshitz}, it is proved that $d^{2i+1}$ vanishes for all $i$.  Thus if $d^{2i}$ vanishes for $i \geq 2$ the spectral sequence would collapse at the $E^3$ page.  If the induced map $d^{2i}$ on $HH(M)$ vanishes for $i \geq 2$ for a bimodule $M$ we say that $M$ is \emph{$\pi$-formal}.

\begin{proposition}
Each $A_n$ is $\pi$-formal.
\label{prop:pi_formal}
\end{proposition}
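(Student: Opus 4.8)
The plan is to make every object concrete using the finite Koszul bimodule resolution $R = A_n \otimes_{\K} B_n^* \otimes_{\K} A_n$ of $A_n$ produced in the proof of Proposition~\ref{prop:smooth}, and to exploit that $A_n$, $B_n^*$, and the bimodules $M$ in play are all finite over $\f$. Taking $R$ for the biprojective resolution, the Tate bicomplex $HC_{*,*}^{Tate}(M\otimes^L M)$ is finite dimensional in each bidegree, its vertical differential is an explicit matrix, and $\tau$ is literally the permutation exchanging the two copies of $R$ and the two copies of $M$ inside $R \otimes_{A_n}(M \otimes_{A_n} R \otimes_{A_n} M)$. So the assertion ``$d^{2i}$ vanishes for $i\ge 2$'' is ultimately a finite linear-algebra statement; the real content is to organize the computation so that this vanishing is \emph{forced} rather than checked by hand.

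The device I would use to force it is the internal (quantum) grading. By Lemma~\ref{lem:koszul} the algebra $A_n$ is homogeneous Koszul: the generators $\xelt{i}{i+1},\ielt{i}{i+1}$ are homogeneous of degree one, $B_n^*$ is finite dimensional with its degree-$k$ part sitting in homological degree $k$, and this grading is respected by the Hochschild differential and by $\tau$. Hence the internal grading descends to every page of the Tate spectral sequence, and the generator $\pi$ of the horizontal $\f[\pi^{\pm}]$-action carries a fixed, nonzero internal degree. Since $d^{2i}$ lowers the $\pi$-weight by $i$ while preserving total internal degree, on each internal-degree summand it must move the homological degree by an amount growing linearly in $i$; as $A_n$ has finite global dimension, each internal-degree summand of $HH_*(M\otimes^L M)$ occupies only a bounded band of homological degrees, so $d^{2i}=0$ once $i$ is large, and the finitely many remaining cases are read off directly from the Koszul complex. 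Combined with the vanishing of the odd differentials from \cite[Proposition~3.10]{Lipshitz}, this collapses the spectral sequence at $E^3$ and gives $\pi$-formality.

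The step I expect to be the genuine obstacle is making the degree count bite for \emph{every} $i\ge 2$ rather than only asymptotically: the crude band bound coming from $\operatorname{gldim} A_n \sim n$ is too weak by itself, so one must sharpen it using the precise shape of $HH_*(A_n)$ for the Koszul algebra $A_n$ (which is small and can be written down explicitly), or understand the induced involution $\tau_*$ well enough to see that the offending classes are already annihilated by $1+\tau_*$ or by $d^2$. A useful reduction, if needed, is to treat the diagonal bimodule $M=A_n$ first, where $A_n \otimes^L_{A_n} A_n \simeq A_n$ turns the question into a statement about the ordinary Hochschild homology of a Koszul algebra (and $\tau_*$ becomes the identity there), and then pass to arbitrary $M$ via the descent from $\pi$-formality of $A_n$ to $\pi$-formality of its bimodules in \cite{Lipshitz}.
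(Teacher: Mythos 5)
There is a genuine gap, and it sits exactly where you flagged it, but it is larger than you suggest. Your central device---that the horizontal variable $\pi$ ``carries a fixed, nonzero internal degree,'' so that $d^{2i}$ is forced to shift degrees by an amount growing in $i$---is unjustified. The horizontal differential of the Tate bicomplex is $1+\tau$, a permutation of tensor factors, and it preserves both the quantum grading and the Hochschild grading; there is no intrinsic nonzero internal degree attached to the horizontal direction (the paper itself points this out in the proof of Proposition~\ref{prop:decat}: the horizontal differential has grading zero, and consequently the spectral sequence differentials do \emph{not} respect the homological grading). What your argument actually uses is only the standard bidegree of $d^{r}$, namely that it shifts the Hochschild degree by $r-1$. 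Since the chain complex built from the finite Koszul resolution $A_n\otimes_{\K}B_n^*\otimes_{\K}A_n$ is bounded, this does kill $d^{2i}$ for $i$ large, but ``large'' grows with $n$ (and with the bimodule), while $\pi$-formality demands vanishing for \emph{every} $i\ge 2$. So the degree count leaves an unbounded-in-$n$ range of differentials $d^4,d^6,\dots$ untouched, and the phrase ``the finitely many remaining cases are read off directly from the Koszul complex'' is precisely the entire content of the proposition, which your proposal does not supply.

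For comparison, the paper's proof handles exactly those cases: by \cite[Theorem~5]{Lipshitz} it suffices to check that $d^{2i}$ vanishes on the single class $1\in\Hom(A^!,A^!)\cong HH_0(A_n,A_n^!)\cong A\otimes B\otimes A\otimes B/\!\sim$, where $A^!$ is modeled by the Koszul resolution, and then one runs the explicit zig-zag computing the higher differentials (apply $\partial$, factor out $(1+\tau)$, repeat), organized by the ``$xy$ phenomenon'' cancellations; the zig-zag terminates identically in zero after four steps, so all $d^{2i}$ vanish on that class. Note also that your proposed reduction---treat the diagonal bimodule $M=A_n$ first, where ``$\tau_*$ becomes the identity,'' then descend---is not the reduction available in \cite{Lipshitz}: the criterion there concerns the dualizing bimodule $A^!$, not the diagonal bimodule, and the claim that $\tau_*$ acts as the identity on $HH(A_n)$ is asserted without justification. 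To repair your write-up you would have to either carry out the finite computation on $1\in\Hom(A^!,A^!)$ (as the paper does) or find a genuinely uniform structural reason for vanishing in the range $2\le i$ below the width of the homological band; the grading argument as stated does not provide one.
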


\begin{proof}  We will drop the $n$ from $A_n$ and $B_n$ for this proof.  We need only check $d^{2i} $ vanishes on $ 1 \in \Hom (A^!, A^!)$ due to ~\cite[Theorem 5]{Lipshitz}.  Here $\Hom (A^!, A^!) \cong HH_o(A, A^!) \cong A \otimes B \otimes A \otimes B/\sim$ where $a_1 \otimes b_1 \otimes a_2 \otimes b_2 z \sim z a_1 \otimes b_1 \otimes a_2 \otimes b_2$.  Note we get this last isomorphism because the Koszul resolution $A \otimes B^* \otimes A$ gives us a model for $A^!$:

$$ A^! = \Hom_{{}_A\Mod_A}(A \otimes B^* \otimes A, A^e) = A \otimes B \otimes A.$$

Note that the differential on $A \otimes B \otimes A \otimes B/\sim$ is 

\begin{multline}
\partial(a_1 \otimes b_1 \otimes a_2 \otimes b_2) = \hspace{-5 em}\sum_{(\xi,\xi') \in \{( {}_i \iota_{i+1}, {}_{i+1}y_i), ( {}_i x_{i+1}, {}_{i+1}w_i) \}}\hspace{-5 em} a_1\xi \otimes \xi' b_1 \otimes a_2 \otimes b_2 + a_1 \otimes b_1\xi' \otimes \xi a_2 \otimes b_2 \\
+ a_1 \otimes b_1 \otimes a_2 \xi \otimes \xi' b_2 + \xi a_1 \otimes b_1 \otimes a_2 \otimes b_2 \xi'.
\end{multline}

For notational convenience let $x$,$y$ denote words in $A$, $B$ that include one $\xelt{i}{i+1}$ or $\yelt{i+1}{i}$ respectively, while $\iota$,$w$ denote words with only $\iota$'s or $w$'s.  Then when $(\xi,\xi'), (\eta, \eta') \in \{( \iota, y), ( x , w) \}$ with $\xi\eta$ and $\eta'\xi'$ both nonzero, we have $\xi\eta = x$ and $\eta'\xi' = y$.  We will call this the $xy$ phenomenon.  Note that since there are exactly two ways to get $\xi\eta = x$ and $\eta'\xi' = y$, when summed over all $(\xi,\xi'), (\eta, \eta')$ these terms vanish.

The rest of the proof is a concrete computation.  To keep notation under control, we will replace the tensor $\otimes$ with the vertical line $|$.  Remember that $\tau(a_1 \otimes b_1 \otimes a_2 \otimes b_2) = a_2 \otimes b_2 \otimes a_1 \otimes b_1$.  

We have that

\begin{align}
\partial (1|1|1|1) &= \sum_{(\xi,\xi')} (\xi|\xi'|1|1) + (1|\xi'|\xi |1) + (1|1|\xi|\xi') + (\xi|1|1|\xi') \nonumber \\
 &=(1+\tau)\Big( \sum_{(\xi,\xi')} (\xi|\xi'|1|1) + (1|\xi'|\xi |1) \Big) \label{eq1}
\end{align}

Let $(1+\tau)^{-1}$(\ref{eq1}) denote dropping the $(1+\tau)$ from Formula (\ref{eq1}).  Then

\begin{align}
\partial \circ (1+\tau)^{-1}(\ref{eq1}) = \sum_{(\xi,\xi'),(\eta,\eta')} & (\xi\eta|\eta'\xi'|1|1) + (\xi|\xi'\eta'|\eta|1) +(\xi|\xi'|\eta|\eta') +(\eta\xi|\xi'|1|\eta') +(\eta|\eta'\xi'|\xi |1) \label{eq2} \\
&+(1|\xi'\eta'|\eta\xi |1)+(1|\xi'|\xi\eta |\eta')+(\eta|\xi'|\xi |\eta') \nonumber
\end{align}

Note the first term when summed up over all $\xi,\eta$ is zero by the $xy$ phenomenon, as is the sixth.  The second and fifth terms cancel in the sum, while $\tau$ applied to the fourth term is the seventh over the sum of all $\xi,\eta$ .  Further:

\begin{align*}
\sum_{\xi,\eta} (\xi|\xi'|\eta|\eta') = \sum_i &(\xelt{i}{i+1}|\welt{i+1}{i}|\ielt{i}{i+1}|\yelt{i+1}{i}) + (\ielt{i}{i+1}|\yelt{i+1}{i}|\xelt{i}{i+1}|\welt{i+1}{i}) \\
&+ (\xelt{i}{i+1}|\welt{i+1}{i}|\xelt{i}{i+1}|\welt{i+1}{i}) + (\ielt{i}{i+1}|\yelt{i+1}{i}|\ielt{i}{i+1}|\yelt{i+1}{i})\\
\sum_{\xi,\eta} (\eta|\xi'|\xi |\eta') = \sum_i &(\xelt{i}{i+1}|\yelt{i+1}{i}|\ielt{i}{i+1}|\welt{i+1}{i}) + (\ielt{i}{i+1}|\welt{i+1}{i}|\xelt{i}{i+1}|\yelt{i+1}{i})\\
& + (\xelt{i}{i+1}|\welt{i+1}{i}|\xelt{i}{i+1}|\welt{i+1}{i}) + (\ielt{i}{i+1}|\yelt{i+1}{i}|\ielt{i}{i+1}|\yelt{i+1}{i})\\
\sum_{\xi,\eta} (\xi|\xi'|\eta|\eta') + (\eta|\xi'|\xi |\eta') = \sum_i &(\xelt{i}{i+1}|\welt{i+1}{i}|\ielt{i}{i+1}|\yelt{i+1}{i}) + (\ielt{i}{i+1}|\yelt{i+1}{i}|\xelt{i}{i+1}|\welt{i+1}{i}) \\
&+ (\xelt{i}{i+1}|\yelt{i+1}{i}|\ielt{i}{i+1}|\welt{i+1}{i}) + (\ielt{i}{i+1}|\welt{i+1}{i}|\xelt{i}{i+1}|\yelt{i+1}{i}) .
\end{align*}

Substituting, we have 
\begin{align}
(\ref{eq2}) = (1+\tau) \Big( \sum_{\xi,\eta} (\eta\xi|\xi'|1|\eta') + \sum_i (\xelt{i}{i+1}|\welt{i+1}{i}|\ielt{i}{i+1}|\yelt{i+1}{i})+(\xelt{i}{i+1}|\yelt{i+1}{i}|\ielt{i}{i+1}|\welt{i+1}{i}) \Big). \label{eq3}
\end{align}

Differentiating again, we have

\begin{align}
\partial \circ (1+\tau)^{-1}(\ref{eq3}) = &\sum_{\xi,\eta, \nu} (\eta\xi\nu|\nu'\xi'|1|\eta') + (\eta\xi|\xi'\nu'|\nu|\eta') +(\eta\xi|\xi'|\nu|\nu'\eta') +(\nu\eta\xi|\xi'|1|\eta'\nu') \label{eq4}\\
 &+\sum_{\nu}\sum_{i} ((\xelt{i}{i+1}\nu|\nu'\welt{i+1}{i}|\ielt{i}{i+1}|\yelt{i+1}{i})+(\xelt{i}{i+1}|\welt{i+1}{i}\nu'|\nu\ielt{i}{i+1}|\yelt{i+1}{i})\nonumber\\
&+(\xelt{i}{i+1}|\welt{i+1}{i}|\ielt{i}{i+1}\nu|\nu'\yelt{i+1}{i})+(\xelt{i}{i+1}|\yelt{i+1}{i}\nu'|\nu\ielt{i}{i+1}|\welt{i+1}{i})\nonumber\\
&+(\xelt{i}{i+1}|\yelt{i+1}{i}|\ielt{i}{i+1}\nu|\nu'\welt{i+1}{i})+(\nu \xelt{i}{i+1}|\yelt{i+1}{i}|\ielt{i}{i+1}|\welt{i+1}{i}\nu')),\nonumber
\end{align}
where a couple of terms, namely $(\nu x|w|\iota|y\nu')$ and $(x\nu|\nu'y|\iota|w)$, have been left off due to being zero; for example one of $\nu x$ or $y\nu'$ must be zero.

The first and fourth terms in Equation  \eqref{eq4} must cancel in the sum by the $xy$ phenomenon. For the rest of the sum there is exactly one $\iota$ or $w$ in each nonzero tensor after multiplying.  For example, in $ (\eta\xi|\xi'\nu'|\nu|\eta')$, if $\eta\xi = \iota$, then $\eta' = \xi' = y$, so to be nonzero we need $\nu' = w$, so $\nu = x$ and $ (\eta\xi|\xi'\nu'|\nu|\eta') = (w|y|x|y)$.  There is less cancellation than one might expect however because writing $(w|y|x|y)$ in the last sentence hides the fact that $w$ and the first $y$ are words of length two, while the $x$ and the second $y$ are words of length one.  In light of this, let $x_j$ denote the (non-unique) word that is of length $j$ and has one $\xelt{i}{i+1}$; similarly for $y_j$,$w_j$,$\iota_j$.

Nonetheless, since each tensor has exactly one $\iota$ or $w$, after differentiating each tensor will have zero $\iota$'s or $w$'s.  Thus even after factoring a $(1+\tau)$, the next differential will be zero.  But we shall finish the computation.

So we have

\begin{align*}
(\ref{eq4}) & = \sum_i (\iota_2|y_2|x_1|y_1)+(x_2|w_2|x_1|y_1)+(x_2|y_2|x_1|w_1)+(\iota_2|y_1|x_1|y_2)+(x_2|w_1|x_1|y_2)\\
&+(x_2|y_1|x_1|w_2)+(x_1|w_2|x_2|y_1)+(x_1|y_2|\iota_2|y_1)+(x_1|w_1|x_2|y_2)+(x_1|y_2|x_2|w_1)\\
&+(x_1|y_1|x_2|w_2)+(x_1|y_1|\iota_2|y_2)\\
&=(1+\tau)\Big( \sum_i (\iota_2|y_2|x_1|y_1)+(x_2|w_2|x_1|y_1)+(x_2|y_2|x_1|w_1)+(\iota_2|y_1|x_1|y_2)\\
&+(x_2|w_1|x_1|y_2)+(x_2|y_1|x_1|w_2)\Big), 
\end{align*}
where we left off the two terms that did cancel from Equation \ref{eq4}.

Differentiating, we have
\begin{align*}
\partial \circ (1+\tau)^{-1}(\ref{eq4}) &  = \sum_i ((x_3|y_3|x_1|y_1)+(x_3|y_2|x_1|y_2)+(x_3|y_3|x_1|y_1)+(x_2|y_3|x_2|y_1)\\
&+(x_2|y_2|x_2|y_2) +(x_3|y_2|x_1|y_2)+(x_3|y_2|x_1|y_2)+(x_3|y_1|x_1|y_3)\\
&+(x_3|y_2|x_1|y_2)+(x_2|y_2|x_2|y_2)+(x_2|y_1|x_2|y_3)+(x_3|y_1|x_1|y_3))\\
& = \sum_i (x_2|y_3|x_2|y_1) + (x_2|y_1|x_2|y_3).
\end{align*}

Thus
\begin{align}
\partial \circ (1+\tau)^{-1}(\ref{eq4}) & = (1+\tau)\Big( \sum_i (x_2|y_3|x_2|y_1) \Big). \label{eq5}
\end{align}

Finally, we then have
\begin{align}
\partial \circ (1+\tau)^{-1}(\ref{eq5}) & = 0. \qedhere
\end{align} \end{proof}

\section{Main Theorem}
\label{sec:main}

\begin {theorem}
\label{thm:main}

Let $\sigma \in \mathfrak{B}_{n+1}$, $\sigma^2$ denote its square in $\mathfrak{B}_{n+1}$, and  $m(\widehat{\sigma}), m(\widehat{\sigma^2}) \subset A \times I$ denote the mirror of the annular closures of $\sigma$ and $\sigma^2$ respectively.  Then there is a spectral sequence which has $E^1$ page isomorphic to $SKh(\widehat{\sigma^2}; n-1)$ and $E^\infty$ isomorphic to the associated graded algebra of a filtration of $SKh(\widehat{\sigma}; n-1)$.  If one shifts the quantum grading of $E^\infty$ up by $n-1$ and then divides the grading by two the isomorphism between $E^\infty$ and the associated graded algebra preserves the quantum grading.

\end{theorem}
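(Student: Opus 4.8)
The plan is to translate both sides into Hochschild homology and then invoke the Lipshitz--Treumann spectral sequence, whose hypotheses were verified in Sections \ref{sec:sm_proper} and \ref{sec:pi}. First I would apply Lemma \ref{lem:AGW} to both $\widehat{\sigma}$ and $\widehat{\sigma^2}$. Writing $n_+$ and $n_-$ for the numbers of negative and positive crossings of $\widehat{\sigma}$ (the lemma's convention), the closure $\widehat{\sigma^2}$ has $2n_+$ negative and $2n_-$ positive crossings, so the lemma yields bigraded isomorphisms
\begin{align*}
SKh(\widehat{\sigma}; n-1) &\cong HH(A_n, M_{m(\sigma)})[-n_-]\{(n-1)+n_{+}-2n_-\},\\
SKh(\widehat{\sigma^2}; n-1) &\cong HH(A_n, M_{m(\sigma^2)})[-2n_-]\{(n-1)+2n_{+}-4n_-\}.
\end{align*}
Since $m(\sigma^2)=m(\sigma)^2$ as braid words and the bimodule attached to a braid word is the iterated tensor product of the elementary bimodules, $M_{m(\sigma^2)}=M_{m(\sigma)}\otimes_{A_n}M_{m(\sigma)}$. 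Each $M_{\sigma_i^{\pm}}$ is a two-term complex of projective $A_n$-bimodules --- this is exactly what the $Kh$-decorated algebra of \cite{AGW1,AGW2} buys, and is already used in Lemma \ref{lem:AGW}; if one prefers, replace any occurrence of the identity bimodule $A_n$ by the Koszul resolution from the proof of Proposition \ref{prop:smooth} --- so $M_{m(\sigma)}$ is a bounded complex of projectives, hence $K$-projective. Therefore the underived tensor product $M_{m(\sigma)}\otimes_{A_n}M_{m(\sigma)}$ already represents $M_{m(\sigma)}\otimes^{L}_{A_n}M_{m(\sigma)}$, and $HH(A_n,M_{m(\sigma^2)})\cong HH(A_n, M_{m(\sigma)}\otimes^{L}_{A_n}M_{m(\sigma)})$ as bigraded vector spaces.

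Next I would invoke Lipshitz--Treumann. By Propositions \ref{prop:proper}, \ref{prop:smooth} and \ref{prop:pi_formal}, $A_n$ is homologically proper, homologically smooth and $\pi$-formal over $\f$; these are exactly the hypotheses under which \cite{Lipshitz} produces, for the perfect bimodule $M=M_{m(\sigma)}$, a spectral sequence with $E^1$-page $HH(A_n, M\otimes^{L}_{A_n}M)$ converging to the associated graded of a complete, exhaustive filtration on $HH(A_n, M)$. (If the raw statement in \cite{Lipshitz} carries an extra $2$-periodic factor coming from the Tate bicomplex $HC_{*,*}^{Tate}$ of Section \ref{sec:pi}, one uses the internal grading to isolate a single period.) All the maps in that bicomplex --- the Hochschild differential, the internal differential of $M_{m(\sigma)}$, and $1+\tau$ --- preserve the internal (quantum) grading, so every page, in particular $E^\infty$, is internally graded. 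Combining with the first paragraph, the $E^1$-page is isomorphic (ungraded) to $SKh(\widehat{\sigma^2}; n-1)$ and $E^\infty$ is the associated graded of a filtration on $SKh(\widehat{\sigma}; n-1)$. Convergence is automatic in any case, since $A_n$ is finite and $M_{m(\sigma)}$ is a bounded complex, so all the Hochschild complexes involved are bounded with finite-dimensional internal pieces.

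Finally I would chase the quantum grading. Transport the quantum grading of $SKh(\widehat{\sigma^2}; n-1)$ onto the spectral sequence via the first paragraph; since the differentials are internally graded, $E^\infty$ inherits it. The point is that the Lipshitz--Treumann identification of $E^\infty$ with the associated graded of $HH(A_n, M_{m(\sigma)})$ is \emph{not} internally degree-preserving: it involves the regrading that doubles internal degrees (the Frobenius twist), reflecting that over $\f$ the self-tensor $M\otimes^{L}_{A_n}M$ behaves like a divided square --- the same phenomenon responsible for the ``$q_{\widehat{\sigma},n-1}^2\bmod 2$'' in Proposition \ref{prop:decat}. Concretely, I expect a class of quantum degree $q$ in $HH(A_n, M_{m(\sigma)})$ to contribute to $E^\infty$ in quantum degree $2q+(n-1)+2n_+-4n_-$; feeding this together with the two shifts $\{(n-1)+n_+-2n_-\}$ and $\{(n-1)+2n_+-4n_-\}$ of Lemma \ref{lem:AGW} through, the quantum degree $q'$ of the corresponding class of $SKh(\widehat{\sigma}; n-1)$ comes out to $q'=(q_{E^\infty}+(n-1))/2$, as claimed. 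I expect this last bookkeeping to be the main obstacle: one must extract from \cite{Lipshitz} the precise internal-grading convention on the spectral sequence --- in particular that the Frobenius/divided-square twist is an honest doubling with no residual shift --- and then check that it assembles with the homological and quantum shifts of Lemma \ref{lem:AGW} into exactly ``shift up by $n-1$, then halve''.
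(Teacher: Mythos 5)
Your proposal is correct and follows essentially the same route as the paper: translate both closures into Hochschild homology via Lemma \ref{lem:AGW}, invoke the Lipshitz--Treumann spectral sequence using Propositions \ref{prop:proper}, \ref{prop:smooth}, and \ref{prop:pi_formal} (the paper just makes the intermediate step explicit, using \cite[Theorem 5]{Lipshitz} to transfer $\pi$-formality from $A_n$ to $M_{m(\sigma)}$ before applying \cite[Theorem 4]{Lipshitz}), and then track the doubled internal grading on $E^\infty$ against the shifts of \cite[Theorem 5.1]{AGW2}. Your extra bookkeeping (the identification $M_{m(\sigma^2)}\cong M_{m(\sigma)}\otimes_{A_n}M_{m(\sigma)}$ representing the derived tensor product, and the explicit check that the shifts assemble into ``add $n-1$, then halve'') is consistent with the paper's grading claim.
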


\begin{proof}

By Lemma \ref{lem:AGW}, we know that $SKh(\widehat{\sigma^2}; n-1) \cong HH(A_n, M_{m(\sigma^2)})$ as bigraded vector spaces.  Now by Propositions \ref{prop:proper} and  \ref{prop:smooth}, our algebras $A_n$ are homologically smooth and proper.  Since $A_n$ is $\pi$-formal by Proposition \ref{prop:pi_formal}, by ~\cite[Theorem 5]{Lipshitz} we know that $M_{m(\sigma)}$ is $\pi$-formal.  Thus by ~\cite[Theorem 4]{Lipshitz}, there is a spectral sequence from  $HH(A_n, M_{m(\sigma^2)})$ to  $HH(A_n, M_{m(\sigma)})$.

Then by Lemma \ref{lem:AGW} again, we have that $HH(A_n, M_{m(\sigma)}) \cong SKh(\widehat{\sigma}; n-1)$.  Note that in the proof of ~\cite[Theorem 4]{Lipshitz} we see that the $E^{\infty}$ page is isomorphic to the Hochschild homology of $\sigma$ where every element has been tensored with itself and thus has had the quantum grading doubled.  Following this change and the grading shift of ~\cite[Theorem 5.1]{AGW2} one arrives at the grading shift given above. \end{proof}

In fact using recent work by Beliakova, Putyra, and Wehrli ~\cite{BPW} we can strengthen the ungraded version of the theorem to apply to all periodic links and obtain:

\begin{theorem}
\label{thm:stronger}
Let $T$ be an $(n+1,n+1)$ tangle, and  $\widehat{T}, \widehat{T^2} \subset A \times I$ denote the annular closures of $T$ and $T^2$ respectively.  Then there is a spectral sequence which has $E^1$ page isomorphic to $SKh(\widehat{T^2}; n-1)$ and $E^\infty$ isomorphic to the associated graded algebra of a filtration of $SKh(\widehat{T}; n-1)$.
\end{theorem}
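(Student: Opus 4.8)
The plan is to follow the proof of Theorem \ref{thm:main} almost verbatim, the only new ingredient being a tangle-level replacement for the braid dictionary of Lemma \ref{lem:AGW}. First I would invoke Beliakova--Putyra--Wehrli \cite{BPW} to attach to every $(n+1,n+1)$-tangle $T$ a finite complex $M_{m(T)}$ of $A_n$-bimodules --- concretely, the part of the Khovanov (or Bar-Natan) complex of $m(T)$ living in the next-to-top winding-number (equivalently, next-to-top $\mathfrak{sl}_2$-weight) grading, rewritten over $A_n$ via the identification of \cite{AGW1, AGW2} --- enjoying two properties: (i) the annular closure is computed by Hochschild homology, $SKh(\widehat{T}; n-1) \cong HH(A_n, M_{m(T)})$, which is the content of \cite{BPW} specialized to this grading and recovers Lemma \ref{lem:AGW} when $T$ is a braid; and (ii) the assignment is monoidal up to homotopy, $M_{m(T \cdot T')} \simeq M_{m(T)} \otimes^{L}_{A_n} M_{m(T')}$, since composition of tangles corresponds to tensoring the associated complexes and this computes the derived tensor product because the bimodules involved are projective. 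In particular, since mirroring commutes with composition, $M_{m(T^2)} = M_{m(T)^{2}} \simeq M_{m(T)} \otimes^{L}_{A_n} M_{m(T)}$.

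Second, the algebra-side hypotheses are already in hand. By Propositions \ref{prop:proper}, \ref{prop:smooth}, and \ref{prop:pi_formal}, each $A_n$ is homologically proper, homologically smooth, and $\pi$-formal. A tangle diagram has finitely many crossings, so $M_{m(T)}$ is a finite complex of projective bimodules, hence a finite cell bimodule; therefore \cite[Theorem 5]{Lipshitz} applies and shows that $M_{m(T)}$ is $\pi$-formal. Now \cite[Theorem 4]{Lipshitz} produces a spectral sequence whose $E^1$ page is $HH(A_n, M_{m(T)} \otimes^{L}_{A_n} M_{m(T)})$ and whose $E^\infty$ page is the associated graded of a filtration of $HH(A_n, M_{m(T)})$.

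Third, I would translate the two ends back through (i): $HH(A_n, M_{m(T)} \otimes^{L}_{A_n} M_{m(T)}) \cong HH(A_n, M_{m(T^2)}) \cong SKh(\widehat{T^2}; n-1)$ and $HH(A_n, M_{m(T)}) \cong SKh(\widehat{T}; n-1)$, yielding the asserted spectral sequence. Since this is the ungraded statement no grading bookkeeping is required; the graded refinement would again double the quantum grading on $E^\infty$ and carry the shift of \cite[Theorem 5.1]{AGW2}, exactly as in Theorem \ref{thm:main}.

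The main obstacle is step one: one must verify that the framework of \cite{BPW} genuinely produces bimodules over this particular algebra $A_n$ (rather than a larger or merely Morita-equivalent algebra) whose Hochschild homology matches the next-to-top winding-number summand of $SKh$ under the conventions of \cite{AGW2}, and that the resulting complex $M_{m(T)}$ lies among the finite cell bimodules to which the Lipshitz--Treumann machinery applies; the monoidality in (ii) must likewise be extracted from their functoriality statement. Once these identifications are pinned down, the remainder of the argument is identical to the proof of Theorem \ref{thm:main}.
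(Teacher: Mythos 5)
Your proposal follows the same route as the paper: replace the appeal to \cite[Theorem 5.1]{AGW2} in Lemma \ref{lem:AGW} with \cite[Theorem C]{BPW} to obtain $SKh(\widehat{T};n-1)\cong HH(A_n,M_{T})$ for tangles, and then run the proof of Theorem \ref{thm:main} verbatim using Propositions \ref{prop:proper}, \ref{prop:smooth}, \ref{prop:pi_formal} and \cite[Theorems 4 and 5]{Lipshitz}. Your added remarks on monoidality and finiteness of the tangle bimodules are exactly the points the paper leaves implicit, so the argument is essentially identical.
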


\begin{proof}
In the proof of Lemma \ref{lem:AGW} use ~\cite[Theorem C]{BPW} instead of ~\cite[Theorem 5.1]{AGW2}.  You obtain the isomorphism

\begin{equation}
\label{eq7}
 SKh(\widehat{T}; n-1) \cong HH(A_n, M_{T}).
\end{equation}
Then follow the proof of Theorem \ref{thm:main} using Equation \eqref{eq7} instead of Lemma \ref{lem:AGW}.
\end{proof}

Note that one should be able to use ~\cite[Theorem C]{BPW} to extend this result to apply to all winding number gradings of $SKh$, not just the next-to-top one.  However one would have to redo Propositions \ref{prop:proper}, \ref{prop:smooth}, and \ref{prop:pi_formal} to apply to larger algebras.

\section{Decategorification and Computations}
\label{sec:dec_comp}

Theorem \ref{thm:main} has a decategorified statement.  Following Roberts in ~\cite{Roberts}, for a link $L$ in the thickened annulus let $q_{L,k} = \sum_{i,j} (-1)^i y^j \mathrm{rk}(SKh^{i,j}(L,k))$.  Then the graded Euler characteristic of sutured annular Khovanov homology is $\sum_k q_{L,k}x^k$; note that one can also define this from a skein relation as found in Section 2 of ~\cite{Roberts}.  In addition $\sum_k q_{L,k} $ is the Jones polynomial of the link in $S^3$.  Then we have the following proposition.

\begin{proposition}
\label{prop:decat}
 Let $\sigma$ be a braid on $n+1$ strands.  Then $y^{n-1}q_{\widehat{\sigma^2},n-1} \equiv q_{\widehat{\sigma},n-1}^2 \pmod 2$.
\end{proposition}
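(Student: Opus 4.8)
The plan is to obtain the decategorified statement directly from Theorem~\ref{thm:main} by taking graded Euler characteristics, using the fact that a spectral sequence does not change the Euler characteristic (with appropriate signs and grading bookkeeping). First I would recall that for a bigraded vector space $V = \bigoplus_{i,j} V^{i,j}$, the Poincar\'e polynomial $\sum_{i,j}(-1)^i y^j \dim V^{i,j}$ is invariant under passing to the homology of any differential that is homogeneous in the $j$-grading and shifts $i$ by one; since every page of the Lipshitz--Treumann spectral sequence has such a differential (the $d^r$ are built from $\partial_{HC}$ and $(1+\tau)$, both of which preserve the relevant quantum grading), the Euler characteristic of the $E^1$ page equals that of the $E^\infty$ page, hence that of the associated graded of the filtration on $SKh(\widehat{\sigma}; n-1)$, hence that of $SKh(\widehat{\sigma}; n-1)$ itself.

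The key computation is then to identify the two sides. By Theorem~\ref{thm:main}, the $E^1$ page is $SKh(\widehat{\sigma^2}; n-1)$ and the $E^\infty$ page, after shifting the quantum grading up by $n-1$ and dividing by two, is the associated graded of $SKh(\widehat{\sigma}; n-1)$. Before the regrading, the $E^\infty$ page is the Hochschild homology of $\sigma$ with ``every element tensored with itself,'' so its quantum grading is doubled. Concretely: if one writes $p_\sigma(y) = \sum_{i,j}(-1)^i y^j \dim HH^{i,j}(A_n, M_{m(\sigma)})$, then the doubling of the quantum grading on $E^\infty$ replaces $y$ by $y^2$, giving $p_\sigma(y^2)$. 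So I would conclude that, working over $\f$ so signs are irrelevant and using Lemma~\ref{lem:AGW} to translate between $HH$ and $SKh$ (the homological shift $[-n_-]$ and quantum shift $\{\cdots\}$ being the same monomial factor on both $\widehat{\sigma}$ and $\widehat{\sigma^2}$ up to a factor that I must track), the Euler characteristic of $SKh(\widehat{\sigma^2}; n-1)$, suitably regraded, equals the square of the Euler characteristic of $SKh(\widehat{\sigma}; n-1)$. Translating the $y \mapsto y^2$ versus the explicit $y^{n-1}$ shift in the statement, and recalling $q_{\widehat{\sigma},n-1}$ is exactly the graded Euler characteristic $\sum_{i,j}(-1)^i y^j \mathrm{rk}(SKh^{i,j}(\widehat{\sigma},n-1))$, yields $y^{n-1} q_{\widehat{\sigma^2},n-1} \equiv q_{\widehat{\sigma},n-1}^2 \pmod 2$, the congruence being exactly the statement that the two Euler characteristics agree as elements of $\f[y^{\pm 1}]$.

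The main obstacle is purely grading bookkeeping: I must check that when one applies Lemma~\ref{lem:AGW} to both $\widehat{\sigma}$ and $\widehat{\sigma^2}$, the homological and quantum shifts $[-n_-]\{(n-1)+n_+ - 2n_-\}$ interact correctly with the doubling of the quantum grading on the $E^\infty$ page. For $\sigma^2$ the crossing counts $n_\pm$ are exactly twice those of $\sigma$, so the shift associated to $\widehat{\sigma^2}$ is ``twice'' the shift for $\widehat{\sigma}$ in an additive sense, and this must be reconciled with the multiplicative ``square'' and the extra factor $y^{n-1}$ appearing in the statement --- the $n-1$ is precisely the part of the shift $(n-1) + n_+ - 2n_-$ that does \emph{not} double when passing from $\sigma$ to $\sigma^2$. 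I would verify this by writing $q_{\widehat{\sigma},n-1} = (\pm y)^{e(\sigma)} p_\sigma(y)$ for the appropriate monomial exponent $e(\sigma)$ coming from the shifts, computing $q_{\widehat{\sigma^2},n-1}$ the same way, and checking that $y^{n-1}q_{\widehat{\sigma^2},n-1}$ and $q_{\widehat{\sigma},n-1}^2$ have the same monomial prefactor (mod $2$, so signs drop out) before invoking the $p_{\sigma^2}(y) = p_\sigma(y^2)$ identity that is the Euler-characteristic shadow of the spectral sequence. This is routine but is where all the content of the ``$y^{n-1}$'' in the statement resides.
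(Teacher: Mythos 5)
Your proposal follows essentially the same route as the paper: decategorify Theorem \ref{thm:main} by taking graded Euler characteristics, with the doubling of the quantum grading on $E^\infty$ producing the square via the mod $2$ Frobenius identity and the undoubled $n-1$ part of the shift producing the factor $y^{n-1}$. One small correction to your bookkeeping: the differentials of this spectral sequence do not all shift the homological grading of $SKh^{i,j}(\widehat{\sigma^2};n-1)$ by one (the $(1+\tau)$-direction has degree zero in that grading), so only the mod $2$ Euler characteristic is preserved --- via pairwise cancellation of generators within a fixed quantum grading --- which is exactly the argument the paper gives and is all your mod $2$ statement requires.
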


\begin{proof} Theorem \ref{thm:main} gives us a spectral sequence from the sutured annular Khovanov homology in winding number grading $n-1$ of $\widetilde{K}$ to that of $K$, and thus we get a relation on the graded Euler characteristics.  However, as above in Theorem \ref{thm:main} we need to shift the quantum grading for the spectral sequence; we therefore need to square the polynomial for $\widehat{\sigma}$ and multiply the polynomial for $\widehat{\sigma^2}$ by $y^{n-1}$ as in the statement of the proposition.  Note this relation is only true modulo 2 also due to grading issues; the spectral sequence in Theorem \ref{thm:main} is induced from a bicomplex where the horizontal differential has grading zero in $SKh^{i,j}(\widetilde{K},n-1).$  Thus the differentials in the spectral sequence do not respect the homological grading of $SKh^{i,j}(\widetilde{K},n-1)$.  However the spectral sequence works by canceling generators in pairs, thus the Euler characteristic is preserved modulo 2.   \end{proof}

We conclude with a detailed computation of an example.  Consider the Hopf link $\widetilde{L}$ inside the thickened annulus, thinking of the Hopf link as a braid on two strands with both strands being homologically nontrivial in the annulus.  The Hopf link is a two-periodic link whose quotient is the unknot $L$ with a single crossing; note $L$ is also homologically nontrivial in the annulus.  We have that $\widetilde{L}$ is the closure of $(\sigma_1^+)^2$ and $L$ is the closure of $\sigma_1^+$.  

By Lemma \ref{lem:AGW}, ~\cite[Theorem 4]{Lipshitz} and Proposition \ref{prop:smooth} , the zeroth page $E^0$ of the spectral sequence of Theorem \ref{thm:main} from $SKh(\widetilde{L}; 1)$ to $SKh(L; 1)$ is $M_{\sigma_1^-} \otimes_{A_1} A_1 \otimes_{\K}  B_1^* \otimes_{\K} A_1 \otimes_{A_1} M_{\sigma_1^-} \otimes_{A_1} A_1 \otimes_{\K}  B_1^* \otimes_{\K} A_1 / \sim$ which is isomorphic to $M_{\sigma_1^-} \otimes_{\K}  B_1^* \otimes_{\K} M_{\sigma_1^-} \otimes_{\K}  B_1^* \otimes_{\K} / \sim $, where note the second $\sim$ is the equivalence relation generated by $m_1 \otimes b_1^* \otimes m_2 \otimes b_2^* k_i \sim k_im_1 \otimes b_1^* \otimes m_2 \otimes b_2^*$ for  $k_i \in \K$.  Note that to have the first page $E^1$ isomorphic to $SKh(\widetilde{L}; 1)$ as bigraded vector spaces as in ~\cite[Theorem 5.1]{AGW2}  we need to shift $HH(A_1,M_{\sigma_1^-})$ by $[-n_-]\{(n-1)+n_{+}-2n_-\}$ where $n_-$ and $n_+$ refer to the positive and negative crossings in $\widetilde{L}$ respectively, the square brackets denote shifting the homological grading down, and the curly brackets denote shifting the quantum grading up.  We thus need to shift all quantum gradings up by two; in the below we have already done this shift.

Now for this spectral sequence $E^0$ has 34 elements; listing first the homological and then the quantum gradings, there are 7 elements in bigrading (0,2), 10 elements in bigrading (1,2), 4 elements in bigrading (2,2), 6 elements in bigrading (1,4), 6 elements in bigrading (2,4), and one element in bigrading (2,6).  We show this complex in the diagrams below.  Note the solid arrows are the differential in Hochschild homology while the dashed arrows denote the map $\tau$.  There are only two idempotents in $A_1$ and $B_1$; below the vertex at the tail of $\xelt{0}{1}$ is denoted $0$ while the vertex at the head of $\xelt{0}{1}$ is denoted $1$.  Since there are no more $x$ edges in $A_1$, $\xelt{0}{1}$ is denoted as $x$; similarly $\iota$, $y$, $w$ correspond to the obvious edges in $A_1$, $B_1^*$.  

Finally, note that in $M_{\sigma_1^-}$ there is a term that ~\cite{AGW1} call $P^{Kh}_1 \otimes {}_1 P^{Kh}$ with four elements $u^* \otimes u$, $v^* \otimes v$, $v^* \otimes u$, and $u^* \otimes v$ that are denoted below as $u$, $v$, $t$, and $s$ respectively.  Note that the bigradings of these elements, after a shift of $[-1]\{2\}$ in the $M_{\sigma_1^-}$ complex but before the {2} shift in the entire Hochschild complex, are $(1,0)$, $(1,0)$, $(2,1)$, and $(0, -1)$ respectively.  The bigradings of $0$, $1$, $\iota$, $x$, $w$, and $y$ are $(0,0)$, $(0,0)$, $(0,1)$, $(-1,-1)$, $(-2,-1)$ and $(-1,1)$ respectively before the {2} shift in the entire Hochschild complex.

First we look at the subcomplex of the $E^0$ page in quantum grading two.

\begin{center}
\begin{tikzpicture}
  \node at (1.4,3) (m1) {$1|1|1|1$};
  \node at (3.2,3) (m2) {$1|w|t|1$};
  \node at (5,3) (m3) {$t|1|1|w$};
  \node at (6.8,3) (m4) {$t|w|t|w$};
  \node at (8.6,3) (m5) {$0|0|t|w$};
  \node at (10.4,3) (m6) {$t|w|0|0$};
  \node at (12.2,3) (m7) {$0|0|0|0$};
  \node at (0,0) (n1) {$1|1|u|1$};
  \node at (1.5,0) (n2) {$u|1|1|1$};
  \node at (3,0) (n3) {$u|w|t|1$};
  \node at (4.5,0) (n4) {$t|1|u|w$};
  \node at (6,0) (n5) {$x|0|t|1$}; 
  \node at (7.5,0) (n6) {$t|1|x|0$}; 
  \node at (9,0) (n7) {$v|0|t|w$};
  \node at (10.5,0) (n8) {$t|w|v|0$};
  \node at (12,0) (n9) {$0|0|v|0$}; 
  \node at (13.5,0) (n10) {$v|0|0|0$};
  \node at (2.25,-3) (o1) {$u|1|u|1$}; 
  \node at (5.25,-3) (o2) {$s|0|t|1$}; 
  \node at (8.25,-3) (o3) {$t|0|s|1$}; 
  \node at (11.25,-3) (o4) {$v|0|v|0$};
  \draw[->, red] (m1) to (n1);
  \draw[->] (m1) to (n2);
  \draw[->] (m2) to (n1);
  \draw[->] (m2) to (n5);
  \draw[->] (m2) to (n3);
  \draw[->] (m3) to (n2);
  \draw[->] (m3) to (n6);
  \draw[->] (m3) to (n4);
  \draw[->] (m4) to (n3);
  \draw[->] (m4) to (n4);
  \draw[->] (m4) to (n7);
  \draw[->] (m4) to (n8);
  \draw[->] (m5) to (n7);
  \draw[->] (m5) to (n9);
  \draw[->] (m5) to (n5);
  \draw[->] (m6) to (n8);
  \draw[->] (m6) to (n10);
  \draw[->] (m6) to (n6);
  \draw[->, red] (m7) to (n10);
  \draw[->] (m7) to (n9);
  \draw[->] (n1) to (o1);
  \draw[->, red] (n2) to (o1);
  \draw[->] (n5) to (o2);
  \draw[->] (n3) to (o1);
  \draw[->] (n3) to (o2);
  \draw[->] (n4) to (o1);
  \draw[->] (n4) to (o3);
  \draw[->] (n7) to (o4);
  \draw[->] (n7) to (o2);
  \draw[->] (n6) to (o3);
  \draw[->] (n8) to (o4);
  \draw[->] (n8) to (o3);
  \draw[->, red] (n9) to (o4);
  \draw[->] (n10) to (o4);
  \draw[<->, dashed, bend left=50] (m2) to (m3);
  \draw[<->, dashed, bend left=50] (m5) to (m6);
  \draw[<->, dashed, bend right=30] (n1) to (n2);
  \draw[<->, dashed, bend right=30] (n3) to (n4);
  \draw[<->, dashed, bend right=30] (n5) to (n6);
  \draw[<->, dashed, bend right=30] (n7) to (n8);
  \draw[<->, dashed, bend right=30] (n9) to (n10);
  \draw[<->, dashed, bend right=30] (o2) to (o3);
\end{tikzpicture}
\end{center}

To find the first page $E^1$ we cancel the vertical arrows; one way to do this is to cancel first the red arrows on the top row and then the red arrows on the bottom row.  One obtains the following complex:

\begin{center}
\begin{tikzpicture}
  \node at (3.2,3) (m2) {$1|w|t|1$};
  \node at (5,3) (m3) {$t|1|1|w$};
  \node at (6.8,3) (m4) {$t|w|t|w$};
  \node at (8.6,3) (m5) {$0|0|t|w$};
  \node at (10.4,3) (m6) {$t|w|0|0$};
  \node at (3,0) (n3) {$u|w|t|1$};
  \node at (4.5,0) (n4) {$t|1|u|w$};
  \node at (6,0) (n5) {$x|0|t|1$}; 
  \node at (7.5,0) (n6) {$t|1|x|0$}; 
  \node at (9,0) (n7) {$v|0|t|w$};
  \node at (10.5,0) (n8) {$t|w|v|0$};
  \node at (5.25,-3) (o2) {$s|0|t|1$}; 
  \node at (8.25,-3) (o3) {$t|0|s|1$}; 
  \draw[->] (m2) to (n5);
  \draw[->, red] (m2) to (n3);
  \draw[->] (m3) to (n6);
  \draw[->, red] (m3) to (n4);
  \draw[->] (m4) to (n3);
  \draw[->] (m4) to (n4);
  \draw[->] (m4) to (n7);
  \draw[->] (m4) to (n8);
  \draw[->, red] (m5) to (n7);
  \draw[->] (m5) to (n5);
  \draw[->, red] (m6) to (n8);
  \draw[->] (m6) to (n6);
  \draw[->] (n5) to (o2);
  \draw[->] (n3) to (o2);
  \draw[->] (n4) to (o3);
  \draw[->] (n7) to (o2);
  \draw[->] (n6) to (o3);
  \draw[->] (n8) to (o3);
  \draw[<->, dashed, bend left=50] (m2) to (m3);
  \draw[<->, dashed, bend left=50] (m5) to (m6);
  \draw[<->, dashed, bend right=30] (n3) to (n4);
  \draw[<->, dashed, bend right=30] (n5) to (n6);
  \draw[<->, dashed, bend right=30] (n7) to (n8);
  \draw[<->, dashed, bend right=30] (o2) to (o3);
\end{tikzpicture}
\end{center}

We then cancel the red arrows in this complex to obtain:

\begin{center}
\begin{tikzpicture}
  \node at (6.8,2) (m4) {$t|w|t|w$};
  \node at (6,0) (n5) {$x|0|t|1$}; 
  \node at (7.5,0) (n6) {$t|1|x|0$}; 
  \node at (5.25,-2) (o2) {$s|0|t|1$}; 
  \node at (8.25,-2) (o3) {$t|0|s|1$}; 
  \draw[->, red] (n5) to (o2);
  \draw[->, red] (n6) to (o3);
  \draw[<->, dashed, bend right=30] (n5) to (n6);
  \draw[<->, dashed, bend right=30] (o2) to (o3);
\end{tikzpicture}
\end{center}

One then cancels the final two red arrows to discover that only one element in bigrading $(0,2)$ survives to the first page from this subcomplex.  Below is the subcomplex of the $E^0$ page in quantum grading four.

\begin{center}
\begin{tikzpicture}
  \node at (0,0)(a1) {$t|y|t|w$};
  \node at (2.2,0)(a2) {$1|y|t|1$};
  \node at (4.4,0)(a3) {$0|0|t|y$};
  \node at (6.6,0)(a4) {$t|w|t|y$};
  \node at (8.8,0)(a5) {$t|1|1|y$};
  \node at (11,0)(a6) {$t|y|0|0$};
  \node at (0,-3)(b1) {$u|y|t|1$};
  \node at (2.2,-3)(b2) {$\iota|0|t|1$};
  \node at (4.4,-3)(b3) {$v|0|t|y$};
  \node at (6.6,-3)(b4) {$t|1|u|y$};
  \node at (8.8,-3)(b5) {$t|1|\iota|0$};
  \node at (11,-3)(b6) {$t|y|v|0$};
  \draw[->, red] (a6) to (b6);
  \draw[->] (a6) to (b5);
  \draw[->] (a5) to (b4);
  \draw[->, red] (a5) to (b5);
  \draw[->] (a4) to (b4);
  \draw[->] (a4) to (b3);
  \draw[->] (a3) to (b3);
  \draw[->] (a3) to (b2);
  \draw[->] (a2) to (b2);
  \draw[->] (a2) to (b1);
  \draw[->] (a1) to (b1);
  \draw[->] (a1) to (b6);
  \draw[<->, dashed, bend left=30] (a1) to (a4);
  \draw[<->, dashed, bend left=30] (a2) to (a5);
  \draw[<->, dashed, bend left=30] (a3) to (a6);
  \draw[<->, dashed, bend right=30] (b1) to (b4);
  \draw[<->, dashed, bend right=30] (b2) to (b5);
  \draw[<->, dashed, bend right=30] (b3) to (b6);
\end{tikzpicture}
\end{center}

We cancel the two red arrows starting from the left to obtain the following complex:

\begin{center}
\begin{tikzpicture}
  \node at (0,0)(a1) {$t|y|t|w$};
  \node at (2.2,0)(a2) {$1|y|t|1$};
  \node at (4.4,0)(a3) {$0|0|t|y$};
  \node at (6.6,0)(a4) {$t|w|t|y$};
  \node at (0,-3)(b1) {$u|y|t|1$};
  \node at (2.2,-3)(b2) {$\iota|0|t|1$};
  \node at (4.4,-3)(b3) {$v|0|t|y$};
  \node at (6.6,-3)(b4) {$t|1|u|y$};
  \draw[->, red] (a4) to (b4);
  \draw[->] (a4) to (b3);
  \draw[->, red] (a3) to (b3);
  \draw[->] (a3) to (b2);
  \draw[->] (a2) to (b2);
  \draw[->] (a2) to (b1);
  \draw[->] (a1) to (b1);
  \draw[->] (a1) to (b4);
  \draw[<->, dashed, bend left=30] (a1) to (a4);
  \draw[<->, dashed, bend right=30] (b1) to (b4);
  \draw[->, dashed, bend left=25] (b3) to (a3);
  \draw[->, dashed] (b3) to (a2);
  \draw[->, dashed, bend right=20] (b3) to (b4);
  \draw[->, dashed, bend right=25] (b2) to (b4);
  \draw[->, dashed, bend left=25] (b2) to (a2);
  \draw[->, dashed, bend left=25] (a1) to (a3);
  \draw[->, dashed, bend left=25] (a1) to (a3);
  \draw[->, dashed, bend left=20] (a1) to (a2);
\end{tikzpicture}
\end{center}

We again cancel the two red arrows starting from the left to obtain:

\begin{center}
\begin{tikzpicture}
  \node at (0,0)(a1) {$t|y|t|w$};
  \node at (2.2,0)(a2) {$1|y|t|1$};
  \node at (0,-3)(b1) {$u|y|t|1$};
  \node at (2.2,-3)(b2) {$\iota|0|t|1$};
  \draw[->, red] (a2) to (b2);
  \draw[->] (a2) to (b1);
  \draw[->] (a1) to (b1);
  \draw[->] (a1) to (b2);
  \draw[->, dashed, bend left=25] (b2) to (a2);
  \draw[->, dashed, bend left=25] (b1) to (a1);
  \draw[->, dashed, bend left=20] (b2) to (a1);
  \draw[->, dashed, bend right=25] (b1) to (b2);
\end{tikzpicture}
\end{center}

We finally cancel the red arrow to obtain:

\begin{center}
\begin{tikzpicture}
  \node at (0,0)(a1) {$t|y|t|w$};
  \node at (0,-3)(b1) {$u|y|t|1$};
  \draw[->, dashed] (b1) to (a1);
\end{tikzpicture}
\end{center}

Notice that this dashed arrow is therefore the only map that survives to the $E^1$ page; we already saw that the quantum grading two subcomplex was reduced to one element in the $E^1$ page while the quantum grading six subcomplex only had one element to begin with.  Thus the $E^1$ page has four elements, of bigradings $(0,2)$, $(1,4)$, $(2,4)$, and $(2,6)$, which is isomorphic to the sutured annular Khovanov homology of the Hopf link as it should be.

 Note that the dashed arrow above goes against the Hochschild homology grading by one while it goes with the $1+\tau$ grading by two; it is thus canceled at the $E^2$ page and the spectral sequence therefore stabilizes at the $E^3$ page.  Thus $E^{\infty}$ has two generators, of bigradings $(0,2)$ and $(2,6)$.  Notice that as in the proof of Proposition \ref{prop:decat} the isomorphism of this page to the sutured annular Khovanov homology of the unknot with one crossing doubles the quantum gradings.  We thus obtain a spectral sequence from $SKh(\widetilde{L}; 1)$ to $SKh(L; 1)$ as in Theorem \ref{thm:main}.

\begin{remark}
As stated in Section \ref{sec:intro} Seidel and Smith ~\cite{SS} also show there is a spectral sequence for two-periodic links to their quotients, although this spectral sequence is on symplectic Khovanov homology.  Note that, although it has not been shown that a quantum grading can be well defined for symplectic Khovanov homology, the spectral sequence of Seidel-Smith cannot preserve any quantum grading.  For the unlink with three components has period two with quotient the unlink with two components.  But notice the Jones polynomial of the unlink with three components is $q^3 + 3q + 3q^{-1} + q^{-3}$ while the Jones polynomial of the unlink with two components is $q^2+2+q^{-2}$ and these polynomials are not congruent modulo 2.
\end{remark}

\bibliography{Sutured_bib}{}
\bibliographystyle{plain}

\end{document}